\DeclareRobustCommand{\lyxmathsym}[1]{\ifmmode\begingroup\def\b@ld{bold}
  \def\rmorbf##1{\ifx\math@version\b@ld\textbf{##1}\else\textrm{##1}\fi}
  \mathchoice{\hbox{\rmorbf{#1}}}{\hbox{\rmorbf{#1}}}
  {\hbox{\smaller[2]\rmorbf{#1}}}{\hbox{\smaller[3]\rmorbf{#1}}}
  \endgroup\else#1\fi}
\providecommand{\tabularnewline}{\\}
\numberwithin{equation}{section} %% Comment out for sequentially-numbered
\numberwithin{figure}{section} %% Comment out for sequentially-numbered
\theoremstyle{plain}
\theoremstyle{plain}
\newtheorem{thm}{Theorem}
\begin{document}

\title{Cari\~{n}ena orthogonal polynomials are Jacobi polynomials}

\author{C. Vignat and P.W. Lamberti}
\address{I.G.M., Universit\'{e} de Marne la Vall\'{e}e and Facultad de Matematica, Astronomia y Fisica, Universidad Nacional de Cordoba and CONICET }
\maketitle

\section{Introduction}

The relativistic Hermite polynomials (RHP) were introduced in 1991
by Aldaya et al. \cite{Aldaya} in a generalization of the theory
of the quantum harmonic oscillator to the relativistic context. These
polynomials were later related to the more classical Gegenbauer (or
more generally Jacobi) polynomials in a study by Nagel \cite{Nagel}.
For this reason, they do not deserve any special study since their
properties can be deduced from the properties of the well-known Jacobi
polynomials - a more general class of polynomials that includes Gegenbauer
polynomials - as underlined by Ismail in \cite{Ismail}. Recently,
Cari\~{n}ena et al. \cite{Carinena} studied an extension of the quantum
harmonic oscillator on the sphere $\mathcal{S}^{2}$ and on the hyperbolic
plane and they showed that the Schrödinger equation can be analytically
solved; the solutions are an extension of the classical wavefunctions
of the quantum harmonic oscillator where the usual Hermite polynomials
are replaced by some new polynomials - that we will call here Cari\~{n}ena
polynomials. In the next section, we show that these Cari\~{n}ena polynomials
are in fact Jacobi polynomials. The last section is devoted to the
application of these results to the nonextensive context.

\title{}

\section{Relativistic Hermite and Cari\~{n}ena polynomials: definitions and notations}

\subsection{Relativistic Hermite polynomials}

\begin{flushleft}
The relativistic Hermite polynomial $H_{n}^{N}$ of degree $n$ and
parameter $N$ is defined by the Rodrigues formula\begin{equation}
H_{n}^{N}\left(X\right)=\left(-1\right)^{n}\left(1+\frac{X^{2}}{N}\right)^{N+n}\frac{d^{n}}{dX^{n}}\left(1+\frac{X^{2}}{N}\right)^{-N};\label{eq:RodriguesRHP}\end{equation}
examples of RHP polynomials are\begin{eqnarray*}
H_{0}^{N}\left(X\right)=1; & H_{1}^{N}\left(X\right)=2X; & H_{2}^{N}\left(X\right)=2\left(-1+X^{2}\left(2+\frac{1}{N}\right)\right);\\
 & H_{3}^{N}\left(X\right)=4\left(1+\frac{1}{N}\right)\left(X^{3}\left(2+\frac{1}{N}\right)-3X\right)\end{eqnarray*}
These polynomials are extensions of the classical Hermite polynomials
$H_{n}\left(X\right)$ that are defined as
\begin{equation}
\label{Hermitepolynomials}
H_{n}\left(X\right)=\left(-1\right)^{n}\exp\left(X^{2}\right)\frac{d^{n}}{dX^{n}}\exp\left(-X^{2}\right)
\end{equation}
and thus can be obtained as the limit case\[
\lim_{N\to+\infty}H_{n}^{N}\left(X\right)=H_{n}\left(X\right).\]
The RHP are orthogonal on the real line in the following sense\begin{equation}
\int_{-\infty}^{+\infty}H_{n}^{N}\left(X\right)H_{m}^{N}\left(X\right)\left(1+\frac{X^{2}}{N}\right)^{-N-1-\frac{m+n}{2}}dX=\frac{\sqrt{N\pi}n!\Gamma\left(2N+n\right)\Gamma\left(N+\frac{1}{2}\right)}{\left(n+N\right)N^{n}\Gamma\left(2N\right)\Gamma\left(N\right)}\delta_{m,n}.\label{eq:orthogonalRHP}\end{equation}
We note that this is an unconventional orthogonality since $H_{n}^{N}\left(X\right)$
and $H_{m}^{N}\left(X\right)$ are orthogonal with respect to a measure
which depends on the degrees of these polynomials.
\par\end{flushleft}

\subsection{Gegenbauer polynomials}

\begin{flushleft}
The Gegenbauer polynomial $C_{n}^{\nu}$ of degree $n$ and parameter
$\nu\ne0$ is defined by the Rodrigues formula\[
C_{n}^{\nu}\left(X\right)=\alpha_{n,\nu}\left(-1\right)^{n}\left(1-X^{2}\right)^{\frac{1}{2}-\nu}\frac{d^{n}}{dX^{n}}\left(1-X^{2}\right)^{n+\nu-\frac{1}{2}}\]
with \[
\alpha_{n,\nu}=\frac{\left(2\nu\right)_{n}}{2^{n}n!\left(\nu+\frac{1}{2}\right)_{n}};\]
examples of Gegenbauer polynomials are\begin{eqnarray*}
C_{0}^{\nu}\left(X\right)=1; & C_{1}^{\nu}\left(X\right)=2\nu X; & C_{2}^{\nu}\left(X\right)=2\nu\left(\nu+1\right)X^{2}-\nu;\\
 & C_{3}^{\nu}\left(X\right)=2\nu\left(\nu+1\right)\left(\frac{2\left(\nu+2\right)}{3}X^{3}-X\right).\end{eqnarray*}
\[
\lyxmathsym{      }\]
These polynomials are orthogonal with respect to the measure $\left(1-X^{2}\right)^{\nu-\frac{1}{2}}dX$
on $\left[-1,+1\right]:$ for $\nu\ne0$\[
\int_{-1}^{+1}C_{n}^{\nu}\left(X\right)C_{m}^{\nu}\left(X\right)\left(1-X^{2}\right)^{\nu-\frac{1}{2}}dX=\frac{\pi2^{1-2\nu}\Gamma\left(n+2\nu\right)}{n!\left(n+\nu\right)\Gamma\left(\nu\right)}\delta_{m,n}\]

\par\end{flushleft}

\subsection{Cari\~{n}ena polynomials}

\begin{flushleft}
The Cari\~{n}ena polynomial of degree $n$ and parameter $\mathcal{N\in\mathbb{R}}$
is defined by the Rodrigues formula\[
\mathcal{H}_{n}^{\mathcal{N}}\left(X\right)=\left(-1\right)^{n}\left(1+\frac{X^{2}}{\mathcal{N}}\right)^{\mathcal{N}+\frac{1}{2}}\frac{d^{n}}{dX^{n}}\left(1+\frac{X^{2}}{\mathcal{N}}\right)^{n-\mathcal{N}-\frac{1}{2}}\]
where $X\in\mathbb{R}$ for $\mathcal{N}>0$ and $X\in\left[-\sqrt{-\mathcal{N}},+\sqrt{-\mathcal{N}}\right]$
when $\mathcal{N}<0.$ Since the cases $\mathcal{N}>0$ and $\mathcal{N}<0$
differ greatly, we'll denote - for reasons that will appear clearly
in the following - the Cari\~{n}ena polynomials with negative parameter
as \[
\mathcal{H}_{n}^{\mathcal{N}}\left(X\right)=\mathcal{C}_{n}^{\nu}\left(X\right),\lyxmathsym{  }\nu=-\mathcal{N}.\]
The Cari\~{n}ena polynomials are orthogonal with respect to the measure
$\left(1+\frac{X^{2}}{\mathcal{N}}\right)^{-\mathcal{N}-\frac{1}{2}}dX$
on the real line for $\mathcal{N}>0,$ and on the interval $\left[-\sqrt{\nu},+\sqrt{\nu}\right]$
with respect to the measure $\left(1-\frac{X^{2}}{\nu}\right)^{\nu-\frac{1}{2}}dX$
when $\mathcal{N}<0$: \[
\int_{\mathbb{R}}\mathcal{H}_{n}^{\mathcal{N}}\left(X\right)\mathcal{H}_{m}^{\mathcal{N}}\left(X\right)\left(1+\frac{X^{2}}{\mathcal{N}}\right)^{-\mathcal{N}-\frac{1}{2}}dX=a_{n}\delta_{m,n}\]
and\[
\int_{-\sqrt{\nu}}^{+\sqrt{\nu}}\mathcal{C}_{n}^{\nu}\left(X\right)\mathcal{C}_{m}^{\nu}\left(X\right)\left(1-\frac{X^{2}}{\nu}\right)^{\nu-\frac{1}{2}}dX=b_{n}\delta_{m,n}\]
for some constants $a_{n}$ and $b_{n}.$
\par\end{flushleft}

\section{Links between Relativistic Hermite and Cari\~{n}ena polynomials}

The following theorems show that the family of Cari\~{n}ena polynomials
is related to the set of RHP and Gegenbauer polynomials in a simple
way.
\begin{thm}
The Cari\~{n}ena polynomial $\mathcal{H}_{n}^{\mathcal{N}}\left(X\right)$
of degree $n$ and parameter $\mathcal{N}>0$ is related to the RHP
polynomial $H_{n}^{N}\left(X\right)$ of same degree $n$ and parameter
$N$ as\begin{equation}
\mathcal{H}_{n}^{\mathcal{N}}\left(X\right)=\left(\frac{N}{\mathcal{N}}\right)^{\frac{n}{2}}H_{n}^{N}\left(X\sqrt{\frac{N}{\mathcal{N}}}\right)\label{eq:RHPtoCarinenaN>0}\end{equation}
with \[
N=\mathcal{N}+1/2-n.\]
\end{thm}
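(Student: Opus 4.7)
The plan is to line up the Rodrigues formulas for $\mathcal{H}_n^{\mathcal{N}}$ and $H_n^N$ directly, forcing the exponents to match by a judicious choice of $N$, and then absorbing the discrepancy in the argument and in the derivative by a linear change of variable.

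First, I would write both Rodrigues formulas side by side:
\begin{equation*}
\mathcal{H}_n^{\mathcal{N}}(X) = (-1)^n\left(1+\tfrac{X^2}{\mathcal{N}}\right)^{\mathcal{N}+\frac{1}{2}}\frac{d^n}{dX^n}\left(1+\tfrac{X^2}{\mathcal{N}}\right)^{n-\mathcal{N}-\frac{1}{2}},
\end{equation*}
\begin{equation*}
H_n^{N}(Y) = (-1)^n\left(1+\tfrac{Y^2}{N}\right)^{N+n}\frac{d^n}{dY^n}\left(1+\tfrac{Y^2}{N}\right)^{-N}.
\end{equation*}
Matching the outer exponents demands $N+n=\mathcal{N}+\frac{1}{2}$, which yields $N=\mathcal{N}+\frac{1}{2}-n$; with this choice, $-N = n-\mathcal{N}-\frac{1}{2}$ automatically matches the inner exponent as well. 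So the two formulas differ only in the shape of the base $(1+Y^2/N)$ versus $(1+X^2/\mathcal{N})$ and in the variable of differentiation.

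Next I would perform the linear substitution $Y=X\sqrt{N/\mathcal{N}}$, chosen precisely so that $Y^2/N=X^2/\mathcal{N}$. Under this substitution the two bases coincide, and by the chain rule
\begin{equation*}
\frac{d^n}{dY^n} = \left(\frac{\mathcal{N}}{N}\right)^{n/2}\frac{d^n}{dX^n}.
\end{equation*}
Inserting these observations into the expression for $H_n^N(Y)$ gives
\begin{equation*}
H_n^{N}\!\left(X\sqrt{\tfrac{N}{\mathcal{N}}}\right) = \left(\frac{\mathcal{N}}{N}\right)^{n/2}\mathcal{H}_n^{\mathcal{N}}(X),
\end{equation*}
which rearranges to the stated identity (\ref{eq:RHPtoCarinenaN>0}).

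There is really no serious obstacle here; the whole argument is a matter of bookkeeping. The only place where one has to be careful is the identification of the parameter $N=\mathcal{N}+\frac{1}{2}-n$: one might initially try to match $N=\mathcal{N}$ directly, which fails because the Cari\~nena Rodrigues formula has an outer exponent $\mathcal{N}+\frac{1}{2}$ rather than $\mathcal{N}+n$. Once one recognises that $N$ is allowed to depend on $n$, the rest falls out of a single change of variable and the chain rule.
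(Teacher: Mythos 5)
Your proposal is correct and follows the same route as the paper's proof: choose $N=\mathcal{N}+\tfrac{1}{2}-n$ so the exponents in the two Rodrigues formulas coincide, then rescale the variable so that $Y^{2}/N=X^{2}/\mathcal{N}$ and pick up the factor $\left(N/\mathcal{N}\right)^{n/2}$ from the $n$-fold chain rule. You actually spell out the change-of-variable step more explicitly than the paper does (the paper's displayed intermediate equation even drops the exponent $\tfrac{n}{2}$ on the prefactor, an evident typo), so nothing is missing.
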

\begin{proof}
Denote $N=\mathcal{N}+\frac{1}{2}-n;$ then\[
\mathcal{H}_{n}^{\mathcal{N}}\left(X\right)=\left(-1\right)^{n}\left(1+\frac{X^{2}}{\mathcal{N}}\right)^{N+n}\frac{d^{n}}{dX^{n}}\left(1+\frac{X^{2}}{\mathcal{N}}\right)^{-N}.\]
But by the Rodrigues formula (\ref{eq:RodriguesRHP}) \[
\left(-1\right)^{n}\left(1+\frac{X^{2}}{\mathcal{N}}\right)^{N+n}\frac{d^{n}}{dX^{n}}\left(1+\frac{X^{2}}{\mathcal{N}}\right)^{-N}=\left(\frac{N}{\mathcal{N}}\right)H_{n}^{N}\left(X\sqrt{\frac{N}{\mathcal{N}}}\right)\]
so that the result holds.
\end{proof}
The same kind of result is now obtained for Cari\~{n}ena polynomials with
negative parameter, where the Gegenbauer polynomials now play the
role of the RHP polynomials.
\begin{thm}
The Cari\~{n}ena polynomial $\mathcal{C}_{n}^{\nu}\left(X\right)$ of
degree $n$ and parameter $\mathcal{N}=-\nu<0$ is related to the
Gegenbauer polynomial $C_{n}^{\nu}$ of same degree $n$ and parameter
$\nu$ as\begin{equation}
\mathcal{C}_{n}^{\nu}\left(X\right)=\frac{1}{\alpha_{n,\nu}}\nu^{-\frac{n}{2}}C_{n}^{\nu}\left(\frac{X}{\sqrt{\nu}}\right).\label{eq:GegenbauertoCarinenaN<0}\end{equation}
\end{thm}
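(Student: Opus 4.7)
The plan is to mimic the proof of the previous theorem: start from the Rodrigues-type definition of $\mathcal{C}_n^{\nu}$ and show that, after an affine change of variable, it collapses to the Gegenbauer Rodrigues formula up to the stated constants. Substituting $\mathcal{N}=-\nu$ into the defining formula of $\mathcal{H}_n^{\mathcal{N}}$ gives
\[
\mathcal{C}_n^{\nu}(X)=(-1)^n\left(1-\frac{X^2}{\nu}\right)^{\frac{1}{2}-\nu}\frac{d^n}{dX^n}\left(1-\frac{X^2}{\nu}\right)^{n+\nu-\frac{1}{2}},
\]
which already has the same overall shape as the Gegenbauer Rodrigues formula, except that $X^2$ is replaced by $X^2/\nu$.

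The key step is then a simple change of variable. I would set $Y=X/\sqrt{\nu}$ in the Gegenbauer Rodrigues formula so that $(1-Y^2)=(1-X^2/\nu)$, and use the chain rule identity $\dfrac{d^n}{dY^n}=\nu^{n/2}\dfrac{d^n}{dX^n}$ to obtain
\[
C_n^{\nu}\!\left(\tfrac{X}{\sqrt{\nu}}\right)=\alpha_{n,\nu}(-1)^n\,\nu^{n/2}\left(1-\frac{X^2}{\nu}\right)^{\frac{1}{2}-\nu}\frac{d^n}{dX^n}\left(1-\frac{X^2}{\nu}\right)^{n+\nu-\frac{1}{2}}.
\]
Comparing with the displayed expression for $\mathcal{C}_n^{\nu}(X)$ yields $C_n^{\nu}(X/\sqrt{\nu})=\alpha_{n,\nu}\,\nu^{n/2}\,\mathcal{C}_n^{\nu}(X)$, and solving for $\mathcal{C}_n^{\nu}(X)$ gives precisely \eqref{eq:GegenbauertoCarinenaN<0}.

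There is essentially no conceptual obstacle; the only thing to watch is bookkeeping of the $\nu^{n/2}$ factor from the chain rule and of the $(\pm 1)$ signs in the two Rodrigues formulas (both carry the same $(-1)^n$, which cancels cleanly). Note also that this argument is structurally a mirror image of the proof of the previous theorem: in the $\mathcal{N}>0$ case the reparameterization $N=\mathcal{N}+1/2-n$ was required to match the RHP Rodrigues formula, whereas in the present $\mathcal{N}<0$ case the Gegenbauer Rodrigues formula already contains the correct exponents $\tfrac{1}{2}-\nu$ and $n+\nu-\tfrac{1}{2}$, so no shift of parameter is needed and only the rescaling $X\mapsto X/\sqrt{\nu}$ together with the normalization constant $\alpha_{n,\nu}$ intervene.
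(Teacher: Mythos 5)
Your proof is correct and follows the same route as the paper: substitute $\mathcal{N}=-\nu$ into the Rodrigues definition and then rescale $X\mapsto X/\sqrt{\nu}$ to recover the Gegenbauer Rodrigues formula. The only difference is that you carry out explicitly the chain-rule bookkeeping (the $\nu^{n/2}$ factor) that the paper dismisses as ``easily checked,'' and your computation of that factor is accurate.
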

\begin{proof}
With $\nu=-\mathcal{N},$ we deduce\[
\mathcal{C}_{n}^{\mathcal{\nu}}\left(X\right)=\left(-1\right)^{n}\left(1-\frac{X^{2}}{\nu}\right)^{\frac{1}{2}-\nu}\frac{d^{n}}{dX^{n}}\left(1-\frac{X^{2}}{\nu}\right)^{n+\nu-\frac{1}{2}}.\]
It can be easily checked that \[
\left(1-\frac{X^{2}}{\nu}\right)^{\frac{1}{2}-\nu}\frac{d^{n}}{dX^{n}}\left(1-\frac{X^{2}}{\nu}\right)^{n+\nu-\frac{1}{2}}=\frac{1}{\alpha_{n,\nu}}\left(\frac{1}{\nu}\right)^{\frac{n}{2}}C_{n}^{\nu}\left(\frac{X}{\sqrt{\nu}}\right)\]
 so that the result holds.
\end{proof}
We now use Nagel's identity \cite{Nagel}\begin{equation}
H_{n}^{N}\left(X\right)=\frac{n!}{N^{\frac{n}{2}}}\left(1+\frac{X^{2}}{N}\right)^{\frac{n}{2}}C_{n}^{N}\left(\frac{X/\sqrt{N}}{\sqrt{1+\frac{X^{2}}{N}}}\right)\label{eq:Nagel}\end{equation}
that connects the RHP polynomials $H_{n}^{N}\left(X\right)$ with
the Gegenbauer polynomials $C_{n}^{N}\left(X\right)$; we show that
the same kind of connection can be derived between Cari\~{n}ena polynomials
with positive parameter $\mathcal{H}_{n}^{\mathcal{N}}\left(X\sqrt{\mathcal{N}}\right)$
and Cari\~{n}ena polynomials with negative parameter $\mathcal{C}_{n}^{\mathcal{\nu}}\left(X\right)$
as follows.
\begin{thm}
The Cari\~{n}ena polynomial $\mathcal{H}_{n}^{\mathcal{N}}\left(X\right)$
of degree $n$ and parameter $\mathcal{N}>0$ is related to the Cari\~{n}ena
polynomial $\mathcal{C}_{n}^{\nu}\left(X\right)$ of same degree $n$
and parameter $\nu$ by the following formula\begin{equation}
\mathcal{H}_{n}^{\mathcal{N}}\left(X\sqrt{\mathcal{N}}\right)=\alpha_{n,\nu}n!\left(\frac{\nu}{\mathcal{N}}\right)^{\frac{n}{2}}\left(1+X^{2}\right)^{\frac{n}{2}}\mathcal{C}_{n}^{\nu}\left(\frac{X\sqrt{\nu}}{\sqrt{1+X^{2}}}\right)\label{eq:CarinenatoCarinena}\end{equation}
where \[
\nu=\mathcal{N}+1/2-n.\]
\end{thm}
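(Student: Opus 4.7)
The plan is to compose the three identities already at hand: Theorem~1 (which rewrites a positive-parameter Cari\~nena polynomial as an RHP), Nagel's identity (\ref{eq:Nagel}) (which rewrites an RHP as a Gegenbauer), and Theorem~2 (which rewrites a Gegenbauer as a negative-parameter Cari\~nena). The key observation that makes the composition clean is that the auxiliary parameter $N=\mathcal{N}+1/2-n$ appearing in Theorem~1 coincides exactly with $\nu=\mathcal{N}+1/2-n$, so a single intermediate parameter threads through the whole calculation.

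First I would specialize Theorem~1 to the point $X\sqrt{\mathcal{N}}$. Since $\sqrt{\mathcal{N}}\cdot\sqrt{N/\mathcal{N}}=\sqrt{N}=\sqrt{\nu}$, this gives
\[
\mathcal{H}_{n}^{\mathcal{N}}\!\left(X\sqrt{\mathcal{N}}\right)=\left(\frac{\nu}{\mathcal{N}}\right)^{n/2}H_{n}^{\nu}\!\left(X\sqrt{\nu}\right).
\]
Next I would apply Nagel's identity (\ref{eq:Nagel}) with $N\!\leftarrow\!\nu$ at the argument $Y=X\sqrt{\nu}$. The substitution collapses nicely because $Y^{2}/\nu=X^{2}$, yielding
\[
H_{n}^{\nu}\!\left(X\sqrt{\nu}\right)=\frac{n!}{\nu^{n/2}}\left(1+X^{2}\right)^{n/2}C_{n}^{\nu}\!\left(\frac{X}{\sqrt{1+X^{2}}}\right).
\]
Finally I would invert the relation of Theorem~2 into the form $C_{n}^{\nu}(Z)=\alpha_{n,\nu}\,\nu^{n/2}\,\mathcal{C}_{n}^{\nu}(Z\sqrt{\nu})$ and apply it with $Z=X/\sqrt{1+X^{2}}$. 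This produces exactly the Cari\~nena polynomial $\mathcal{C}_{n}^{\nu}\!\bigl(X\sqrt{\nu}/\sqrt{1+X^{2}}\bigr)$ that appears on the right-hand side of (\ref{eq:CarinenatoCarinena}).

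Collecting prefactors is the only bookkeeping step: from the three substitutions we pick up
\[
\left(\frac{\nu}{\mathcal{N}}\right)^{n/2}\cdot\frac{n!}{\nu^{n/2}}\cdot\alpha_{n,\nu}\,\nu^{n/2}=\alpha_{n,\nu}\,n!\left(\frac{\nu}{\mathcal{N}}\right)^{n/2},
\]
which is precisely the scalar appearing in (\ref{eq:CarinenatoCarinena}). There is no genuine obstacle in this argument, since each ingredient has already been proved; the only point that requires mild care is tracking the three square roots $\sqrt{\mathcal{N}}$, $\sqrt{\nu}$, $\sqrt{N/\mathcal{N}}$ and verifying that they combine so that the argument of $H_{n}^{\nu}$ becomes $X\sqrt{\nu}$ and $Y^{2}/\nu$ collapses to $X^{2}$ inside Nagel's formula.
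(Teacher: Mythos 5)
Your proposal is correct and is exactly the paper's argument: the paper's proof simply states that the theorem is a direct consequence of Nagel's identity (\ref{eq:Nagel}) together with (\ref{eq:RHPtoCarinenaN>0}) and (\ref{eq:GegenbauertoCarinenaN<0}), and you have carried out that composition explicitly, with the parameter bookkeeping and prefactors checking out.
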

\begin{proof}
This is a direct consequence of Nagel's identity (\ref{eq:Nagel})
and equalities (\ref{eq:RHPtoCarinenaN>0}) and (\ref{eq:GegenbauertoCarinenaN<0}).
\end{proof}
These results are summarized in Table 1. %
\begin{table}
\begin{centering}
\begin{tabular}{|ccc|}
\hline 
Relativistic Hermite $H_{n}^{N}\left(X\right)$ & $\overset{\text{(\ref{eq:RHPtoCarinenaN>0})}}{\longrightarrow}$ & Cari\~{n}ena $\mathcal{H}_{n}^{\mathcal{N}}\left(X\right)$ with $\mathcal{N}>0$\tabularnewline
\begin{tabular}{c}
\tabularnewline
Nagel's identity (\ref{eq:Nagel}) $\downarrow$\tabularnewline
\tabularnewline
\end{tabular} &  & \begin{tabular}{c}
\tabularnewline
$\downarrow$(\ref{eq:CarinenatoCarinena})\tabularnewline
\tabularnewline
\end{tabular}\tabularnewline
Gegenbauer $C_{n}^{\nu}\left(X\right)$ & $\overset{\text{(\ref{eq:GegenbauertoCarinenaN<0})}}{\longrightarrow}$ & Cari\~{n}ena $\mathcal{C}_{n}^{\nu}\left(X\right)$ with $\mathcal{N}=-\nu<0$\tabularnewline
\hline
\end{tabular}
\par\end{centering}

\medskip{}
\medskip{}

\caption{Summary of the results}

\end{table}

\section{The nonextensive setup}

In the nonextensive theory, the classical Shannon entropy of a probability
density $f_{X}$\[
H=-\int f_{X}\log f_{X}\]
is replaced by the so-called Tsallis entropy \[
H_{q}=\frac{1}{1-q}\int\left(f_{X}-f_{X}^{q}\right)\]
where $q$ is a positive real number called the nonextensivity parameter.
It can be checked by L'Hospital rule that \[
H=\lim_{q\to1}H_{q}.\]
The canonical distribution in the classical $q=1$ case - that is
the distribution with maximum entropy and given variance $\sigma^{2}$
- is known to be Gaussian distribution\begin{equation}
f_{X}\left(X\right)=\frac{1}{\sigma\sqrt{2\pi}}\exp\left(-\frac{X^{2}}{2\sigma^{2}}\right).\label{eq:Gaussian}\end{equation}

The polynomials orthogonal on the real line with respect to the Gaussian
measure are the Hermite polynomials (\ref{Hermitepolynomials}); the Hermite \textit{functions}
are defined as
\[
h_{n}\left(X\right)=\exp\left(-\frac{X^{2}}{2}\right)H_{n}\left(X\right); \,n\ge0
\]
and verify the simple orthogonality property\[
\int_{\mathbb{R}}h_{n}\left(X\right)h_{m}\left(X\right)dX=\sqrt{\pi}2^{n}n!\delta_{m,n}.\]
In the nonextensive case, the canonical distributions are called $q-$Gaussian distributions
and read, for $q<1$\[
f_{X}\left(X;q\right)=\frac{\Gamma\left(\frac{2-q}{1-q}+\frac{1}{2}\right)}{\Gamma\left(\frac{2-q}{1-q}\right)\sigma\sqrt{\pi d}}\left(1-\frac{X^{2}}{d\sigma^{2}}\right)_{+}^{\frac{1}{1-q}};\lyxmathsym{  }d=2\frac{2-q}{1-q}+1\]
and for $1<q<\frac{5}{3}$\[
f_{X}\left(X;q\right)=\frac{\Gamma\left(\frac{1}{q-1}\right)}{\Gamma\left(\frac{1}{q-1}-\frac{1}{2}\right)\sigma\sqrt{\pi\left(m-2\right)}}\left(1+\frac{X^{2}}{\left(m-2\right)\sigma^{2}}\right)^{\frac{1}{1-q}};\lyxmathsym{  }m=\frac{2}{q-1}-1\]
It can be easily checked that the limit case \[
\lim_{q\to1}f_{X}\left(X;q\right)\]
coincides with the Gaussian distribution (\ref{eq:Gaussian}). 

To our best knowledge, the polynomials orthogonal with respect to
the $q-$Gaussian distributions - the extensions of the Hermite polynomials
- have not been studied in the non-extensive theory. They can be deduced
from the results of Section 2, and are indicated in Table 2 %
\footnote{Note that we consider here the $q-$Gaussian distributions with scaling
constants normalized to 1%
}. In the case $q<1,$ the Gegenbauer polynomials are the polynomials
orthogonal with respect to the $q-$Gaussian distribution. In the
case $q>1,$ either the Cari\~{n}ena or the Relativistic Hermite polynomials
are orthogonal with respect to the $q-$Gaussian measure; in the relativistic
case, this measure should also depend on indices $m$ and $n,$ what
is not the case in the Cari\~{n}ena case. In all cases, the corresponding
orthogonal function and the domain of definition $I$ is given; each
of the corresponding orthogonal functions - let us call it generically
$w_{n}\left(X\right)$ - verifies the orthogonality property\[
\int_{I}w_{n}\left(X\right)w_{m}\left(X\right)dX=K_{n}\delta_{m,n}\]
for some constant $K_{n}.$

\begin{table}
\begin{centering}
\label{tab:table2}\begin{tabular}{|c|c|c|c|}
\cline{2-4} 
\multicolumn{1}{c|}{} & $q$ & orthogonal function & domain\tabularnewline
\hline 
\begin{tabular}{c}
Gegenbauer \tabularnewline
polynomials\tabularnewline
\end{tabular} & 
$\frac{2\nu-3}{2\nu-1}<1$
 & $\mathfrak{c}_{n}^{\nu}\left(X\right)=\left(1-X^{2}\right)^{\frac{\nu}{2}-\frac{1}{4}}C_{n}^{\nu}\left(X\right)$ & $\left[-1;1\right]$\tabularnewline
\hline 
\begin{tabular}{c}
Cari\~{n}ena\tabularnewline
polynomials\tabularnewline
\end{tabular}  & $\frac{2\mathcal{N}+3}{2\mathcal{N}+1}>1$ & $\mathfrak{h}_{n}^{\mathcal{N}}\left(X\right)=\left(1+\frac{X^{2}}{\mathcal{N}}\right)^{-\frac{\mathcal{N}}{2}-\frac{1}{4}}\mathcal{H}_{m}^{\mathcal{N}}\left(X\right)$ & $\mathbb{R}$\tabularnewline
\hline 
\begin{tabular}{c}
Relativistic Hermite\tabularnewline
polynomials\tabularnewline
\end{tabular} & $\frac{2+N+\frac{m+n}{2}}{1+N+\frac{m+n}{2}}>1$ & $h_{n}^{N}\left(X\right)=\left(1+\frac{X^{2}}{N}\right)^{-\frac{N+1+n}{2}}H_{n}^{N}\left(X\right)$ & $\mathbb{R}$\tabularnewline
\hline
\end{tabular}
\par\end{centering}

\caption{polynomials orthogonal with respect to the $q-$Gaussian measure and
the corresponding orthogonal functions}

\end{table}

It turns out that the orthogonal functions above cited describe the
behaviours of physically significant systems:
\begin{itemize}
\item as shown in \cite{Carinena}, the probability density that describes
the harmonic oscillator on a 2-dimensional surface of constant negative
curvature $\kappa$ (typically the hyperbolic plane) are
\begin{equation}
\label{fmnN}
f_{m,n,\mathcal{N}}\left(y,z\right)=\vert\mathfrak{h}_{n}^{\mathcal{N}-m-\frac{1}{2}}\left(y\right)\vert^{2}\vert\mathfrak{h}_{m}^{\mathcal{N}}\left(z\right)\vert^{2}
\end{equation}
with $z=\frac{x}{\sqrt{1+y^{2}}}$ (note that $y$ and $z$ are not
independent variables). The parameter $\mathcal{N}$ here is defined
as \[
\mathcal{N}=-\frac{m\alpha}{\hbar\kappa}>0.\]

\item as shown in the same reference, the probability density that describes
the harmonic oscillator on a 2-dimensional surface of constant positive
curvature $\kappa$ (typically the sphere) are
\begin{equation}
\label{gmnN}
g_{m,n,\nu}\left(y,z\right)=\vert\mathfrak{c}_{n}^{\nu+m+\frac{1}{2}}\left(y\right)\vert^{2}\vert\mathfrak{c}_{m}^{\nu}\left(z\right)\vert^{2}
\end{equation}
with $z=\frac{x}{\sqrt{1-y^{2}}}$ and \[
\nu=\frac{m\alpha}{\hbar\kappa}>0.\]

\item the harmonic oscillator in the relativistic context as described by
\cite{Aldaya} has probability density \[
f_{n,N}\left(X\right)=\vert h_{n}^{N}\left(X\right)\vert^{2}\]

\end{itemize}
where the parameter $N>0$ is defined as\[
N=\frac{mc^{2}}{\hbar\omega}\]
so that the non-relativistic limit $c\to+\infty$ corresponds to the
classical $N\to+\infty$ Hermite polynomials.

Thus the behaviour of the harmonic oscillator - in either the relativistic
case or the case of constant curvature geometries - can be related
to the nonextensive framework, giving in each case an explicit physical
interpretation of the nonextensivity parameter $q$ in terms of the
physical constants of the harmonic oscillator, as shown in Table 3.
We note that in the case of the harmonic oscillator on the sphere
or on the hyperbolic plane, the densities (\ref{fmnN}) and (\ref{gmnN}) are seperable functions in the  variables $z$
and $y,$ each term inducing a different value of $q.$

\begin{table}[h]
\begin{centering}
\begin{tabular}{|c|c|c|}
\hline 
positive curve $\kappa$ & $\frac{2\nu-3}{2\nu-1}$ & $\frac{\nu+m-1}{\nu+m}$\tabularnewline
\hline 
negative curve $\kappa$ & $\frac{2\mathcal{N}+3}{2\mathcal{N}+1}$ & $\frac{\mathcal{N}-m+1}{\mathcal{N}-m}$
\tabularnewline
\hline 
RHP & \multicolumn{1}{c}{$1+\frac{1}{1+\frac{mc^{2}}{\hbar N}+\frac{m+n}{2}}$} & \tabularnewline
\hline
\end{tabular}
\par\end{centering}

\caption{values of the nonextensivity parameter $q$ associated with the three
harmonic oscillators}

\end{table}

\section{A natural bijection}

\subsection{the relativistic harmonic oscillator}

Let us denote \[
f_{n,N}\left(X\right)=\vert h_{n}^{N}\left(X\right)\vert^{2},\lyxmathsym{   }g_{n,\nu}\left(Y\right)=\vert\mathfrak{c}_{n}^{\nu}\left(Y\right)\vert^{2}\]
the probability densities associated to the orthogonal functions studed
above. There exists a geometric interpretation of Nagel's formula
in the framework of non-extensivity: the ground state distribution
$f_{0,N}\left(X\right)=\vert h_{0}^{N}\left(X\right)\vert^{2}$ of
a $q-$Gaussian system $X$ with $q>1$ coincides with the ground
state $\vert\mathfrak{c}_{0}^{\nu}\left(Y\right)\vert^{2}$ of a $q-$Gaussian
system $Y$ provided that\[
Y=\frac{X/\sqrt{N}}{\sqrt{1+\frac{X^{2}}{N}}}\lyxmathsym{ }\text{and}\lyxmathsym{ }\nu=N\]
 Our main result is that this geometric interpretation holds not only
for the ground state, but for all states of the relativistic harmonic
oscillator as follows
\begin{thm}
If $X\sim f_{n,N}$ then the random variable \begin{equation}
Y=\frac{X/\sqrt{N}}{\sqrt{1+\frac{X^{2}}{N}}}\label{eq:XY}\end{equation}
is distributed according to $g_{n,\nu}$ with $\nu=N.$\end{thm}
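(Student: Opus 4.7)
My plan is to reduce the statement to Nagel's identity~(\ref{eq:Nagel}) via the elementary change-of-variables formula for probability densities. The map $\varphi\colon X\mapsto Y=(X/\sqrt{N})/\sqrt{1+X^{2}/N}$ is a smooth increasing bijection from $\mathbb{R}$ onto $(-1,+1)$, so it suffices to verify that, up to the normalizing constants that turn $|h_{n}^{N}|^{2}$ and $|\mathfrak{c}_{n}^{N}|^{2}$ into probability densities, one has $f_{n,N}(X)=g_{n,N}\bigl(\varphi(X)\bigr)\,|\varphi'(X)|$ for all $X\in\mathbb{R}$.

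First I would record the three ingredients that make the algebra transparent: the identity $1-Y^{2}=(1+X^{2}/N)^{-1}$, the Jacobian $\varphi'(X)=N^{-1/2}(1+X^{2}/N)^{-3/2}$, and Nagel's identity~(\ref{eq:Nagel}), which in the present notation reads $C_{n}^{N}(Y)=\tfrac{N^{n/2}}{n!}(1+X^{2}/N)^{-n/2}H_{n}^{N}(X)$. Substituting these into the definition $\mathfrak{c}_{n}^{N}(Y)=(1-Y^{2})^{N/2-1/4}C_{n}^{N}(Y)$ from Table~2 yields
\[
\mathfrak{c}_{n}^{N}(Y)=\frac{N^{n/2}}{n!}\,\bigl(1+X^{2}/N\bigr)^{-N/2+1/4-n/2}\,H_{n}^{N}(X).
\]

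Squaring and multiplying by $|\varphi'(X)|$ then gives
\[
g_{n,N}(Y)\,|\varphi'(X)|=\frac{N^{n-1/2}}{(n!)^{2}}\,\bigl(1+X^{2}/N\bigr)^{-(N+1+n)}\,\bigl[H_{n}^{N}(X)\bigr]^{2},
\]
and the right-hand side is exactly a constant multiple of $f_{n,N}(X)=(1+X^{2}/N)^{-(N+1+n)}[H_{n}^{N}(X)]^{2}$ from the same table. Since both $f_{n,N}$ and $g_{n,N}$ are normalized to unit mass, the constants must agree, which identifies the push-forward of $f_{n,N}$ under $\varphi$ with $g_{n,N}$ and proves the theorem.

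I do not expect a genuine obstacle: the entire argument is a one-line change of variables once Nagel's identity is in hand. The only point that requires some care is bookkeeping the exponents $-N-1-n$, $N/2-1/4$, $-n/2$, $-3/2$ to check that they combine to $-(N+1+n)$ on the nose, and verifying that the prefactor $N^{n-1/2}/(n!)^{2}$ is precisely the ratio of the normalization constants relating $|h_{n}^{N}|^{2}$ to $|\mathfrak{c}_{n}^{N}|^{2}$; this last check can be done either by integrating both sides or, more cheaply, by invoking the orthogonality relations already cited in Section~2.
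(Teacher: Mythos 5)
Your proposal is correct and follows essentially the same route as the paper: a one--dimensional change of variables for the map $X\mapsto Y$ combined with Nagel's identity (\ref{eq:Nagel}) to convert $H_{n}^{N}$ into $C_{n}^{N}$, with the exponents collapsing to $(1-Y^{2})^{N-\frac{1}{2}}\vert C_{n}^{N}(Y)\vert^{2}$. The only cosmetic difference is direction (you push $g_{n,N}$ back to $f_{n,N}$ rather than pushing $f_{n,N}$ forward) and that you track the multiplicative constants, which the paper silently absorbs into normalization.
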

\begin{proof}
The distribution of $Y$ defined by (\ref{eq:XY}) is\[
f_{Y}\left(Y\right)=\left(1+\frac{X^{2}}{N}\right)^{\frac{3}{2}}f_{n,N}\left(\frac{\sqrt{N}Y}{\sqrt{1-Y^{2}}}\right)\]
or equivalently\begin{eqnarray*}
f_{Y}\left(Y\right) & = & \left(1-Y^{2}\right)^{-\frac{3}{2}}\left(1-Y^{2}\right)^{n+N+1}\vert H_{n}^{N}\left(\frac{\sqrt{N}Y}{\sqrt{1-Y^{2}}}\right)\vert^{2}\\
 & = & \left(1-Y^{2}\right)^{n+N-\frac{1}{2}}\left(1+\frac{Y^{2}}{1-Y^{2}}\right)^{n}\vert C_{n}^{N}\left(Y\right)\vert^{2}\\
 & = & \left(1-Y^{2}\right)^{N-\frac{1}{2}}\vert C_{n}^{N}\left(Y\right)\vert^{2}=g_{n,\nu}\left(Y\right).\end{eqnarray*}

\end{proof}

\subsection{The harmonic oscillator on the sphere and on the hyperbolic plane}

We now extend the preceding result to the case of the harmonic oscillator
on spaces of constant curvature. We set to one all scaling constants
for simplicity.

\begin{thm}
Consider the harmonic oscillator on the hyperbolic plane described
by its coordinates $\left(x,y\right)$ and with distribution 
\[
f_{m,n,N}\left(z,y\right)=\vert\mathfrak{h}_{n}^{N-m-\frac{1}{2}}\left(y\right)\vert^{2}\vert\mathfrak{h}_{m}^{N}\left(z\right)\vert^{2}
\]
(with
$z=\frac{x}{\sqrt{1+y^{2}}}$).
If this system is transformed as \begin{equation}
X=\frac{x}{\sqrt{1+x^{2}+y^{2}}};\,\,Y=\frac{y}{\sqrt{1+x^{2}+y^{2}}}\,\Leftrightarrow\,x=\frac{X}{\sqrt{1-X^{2}-Y^{2}}};\,y=\frac{Y}{\sqrt{1-X^{2}-Y^{2}}}\label{eq:change}\end{equation}
then the new system $\left(X,Y\right)$ follows the distribution \[
g_{m,n,\nu}\left(Z,Y\right)=\vert\mathfrak{c}_{m}^{\nu+n+\frac{1}{2}}\left(Y\right)\vert^{2}\vert\mathfrak{c}_{n}^{\nu}\left(Z\right)\vert^{2}\]

where\[
\nu=N-m-n.\]
\end{thm}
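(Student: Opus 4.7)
The plan is to mirror the proof of Theorem 4 in two dimensions. I would compute the pushforward density of $(x,y)$ under the change of variables (\ref{eq:change}) and then recognise the result as $g_{m,n,\nu}(Z,Y)$. The pivotal algebraic ingredient is the factorisation $1+x^2+y^2=(1+z^2)(1+y^2)$ with $z=x/\sqrt{1+y^2}$, which is the two-dimensional analogue of the identity $1+X^2/N=1/(1-Y^2)$ exploited in the one-dimensional proof.

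First I would invert (\ref{eq:change}) to get $x=X/\sqrt{1-X^2-Y^2}$ and $y=Y/\sqrt{1-X^2-Y^2}$; a short calculation then gives $|\partial(x,y)/\partial(X,Y)|=(1-X^2-Y^2)^{-2}$, together with the companion identities $1+y^2=(1-X^2)/(1-X^2-Y^2)$, $1+x^2=(1-Y^2)/(1-X^2-Y^2)$, and $z=X/\sqrt{1-X^2}$. These already reveal the structure: setting $Z=X/\sqrt{1-Y^2}$ yields $Z=x/\sqrt{1+x^2}$, the one-dimensional Nagel transform of $x$, while $Y/\sqrt{1-X^2}=y/\sqrt{1+y^2}$ is the Nagel transform of $y$. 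So (\ref{eq:change}) is, up to a coupling factor, a product of two Nagel maps.

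Next I would pull back the density. Since $f_{m,n,N}$ is given in the $(z,y)$ chart, the $(x,y)$-density carries an extra factor $1/\sqrt{1+y^2}$ from $|dz/dx|$, giving $p_{X,Y}(X,Y)=|\mathfrak{h}_n^{N-m-1/2}(y)|^{2}|\mathfrak{h}_m^{N}(z)|^{2}(1+y^2)^{-1/2}(1-X^2-Y^2)^{-2}$. Substituting for $y$ and $z$ and rewriting the $\mathfrak{h}$-weight prefactors with the identities above, the denominators inside the $\mathfrak{h}$'s collapse into powers of $(1-X^2)$, $(1-Y^2)$ and $(1-X^2-Y^2)$. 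The polynomial factors $\mathcal{H}_m^{N}(z)$ and $\mathcal{H}_n^{N-m-1/2}(y)$ can then be converted to Gegenbauer polynomials via Nagel's identity (\ref{eq:Nagel}) and subsequently to Cari\~{n}ena polynomials via (\ref{eq:GegenbauertoCarinenaN<0}); equivalently, one may invoke the composite identity (\ref{eq:CarinenatoCarinena}) of Theorem 3 in one stroke.

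The main obstacle will be bookkeeping. The indices $m$ and $n$ swap roles between input and output (the output $Y$-factor carries index $m$ with parameter $\nu+n+\frac{1}{2}$, whereas the input $z$-factor had index $m$ with parameter $N$), and the relation $\nu=N-m-n$ emerges only after combining several $\frac{1}{2}$-shifts produced by the weight exponents. Tracking these shifts so that the Jacobian, the chart factor $(1+y^2)^{-1/2}$, and the two $\mathfrak{h}$-weight exponents combine to produce exactly $(1-Y^2)^{\nu+n}$ and $(1-Z^2)^{\nu-1/2}$ (as required by the definitions of $\mathfrak{c}_m^{\nu+n+1/2}$ and $\mathfrak{c}_n^{\nu}$) is the most delicate step; once the exponents line up, the polynomial identities follow essentially verbatim from Nagel's formula applied in each coordinate.
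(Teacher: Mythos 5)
Your overall route is the paper's route: rewrite the density in the $(x,y)$ chart, change variables by (\ref{eq:change}), compute the Jacobian, and convert the Cari\~{n}ena polynomials to Gegenbauer polynomials through Nagel's identity. All of your auxiliary identities --- $1+x^{2}+y^{2}=(1+z^{2})(1+y^{2})$, $1+y^{2}=(1-X^{2})/(1-X^{2}-Y^{2})$, $Y/\sqrt{1-X^{2}}=y/\sqrt{1+y^{2}}$, $z=X/\sqrt{1-X^{2}}$, and the Jacobian $(1-X^{2}-Y^{2})^{-2}$ for the inverse map --- are exactly the ones the paper uses, explicitly or implicitly, and you correctly anticipate both the $m\leftrightarrow n$ swap in the conclusion (the paper in fact ends with $\tilde{f}_{m,n,N}\left(X,Y\right)=g_{n,m,\nu}\left(Y,X\right)$) and the fact that the whole argument reduces to making half-integer exponents line up.

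The one substantive point where you depart from the paper, and which will prevent your exponents from lining up as the paper's do, is the measure with respect to which the densities are transported. You push forward a Lebesgue density, so your net prefactor is $(1+y^{2})^{-1/2}(1-X^{2}-Y^{2})^{-2}$. The paper instead writes the transformation law as $\tilde{f}_{m,n,N}\left(X,Y\right)d\mu\left(X,Y\right)=f_{m,n,N}\left(x,y\right)d\mu\left(x,y\right)$ with the invariant volume elements $d\mu\left(x,y\right)=dxdy/\sqrt{1+x^{2}+y^{2}}$ of the hyperbolic plane and $d\mu\left(X,Y\right)=dXdY/\sqrt{1-X^{2}-Y^{2}}$ of the sphere; combined with the Jacobian, this gives the net factor $(1-X^{2}-Y^{2})^{-1}$ applied to the $(x,y)$-expression of the density (whose prefactor $(1+y^{2})^{n}$ already absorbs the $|dz/dx|=(1+y^{2})^{-1/2}$ chart factor that you introduce separately). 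Your answer therefore differs from the paper's by a full power of $(1-X^{2}-Y^{2})$: you would land on the exponent $N-m-n-\frac{3}{2}$ where the paper obtains $N-m-n-\frac{1}{2}$ and then identifies the result with $g_{n,m,\nu}\left(Y,X\right)$, $\nu=N-m-n$. Since the identification hinges precisely on these exponents, you must adopt the invariant-measure convention (equivalently, decide with respect to which volume element each of $f_{m,n,N}$ and $g_{m,n,\nu}$ is a density) before the bookkeeping you describe can be completed; with that convention in place, the rest of your plan reproduces the paper's proof.
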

\begin{proof}
As a function of $\left(x,y\right),$ the density of the harmonic
oscillator writes, in terms of Gegenbauer polynomials,\[
f_{m,n,N}\left(x,y\right)=\left(1+y^{2}\right)^{n}\left(1+x^{2}+y^{2}\right)^{-N+m-\frac{1}{2}}\vert C_{n}^{N-m-n}\left(\frac{y}{\sqrt{1+y^{2}}}\right)\vert^{2}\vert C_{m}^{N-m+\frac{1}{2}}\left(\frac{x}{\sqrt{1+x^{2}+y^{2}}}\right)\vert^{2}\]

We now perform the change of variable
(\ref{eq:change}); the distribution of the new system is obtained
as \[
\tilde{f}_{m,n,N}\left(X,Y\right)d\mu\left(X,Y\right)=f_{m,n,N}\left(x,y\right)d\mu\left(x,y\right)\]
with the measure \[
d\mu\left(x,y\right)=\frac{dxdy}{\sqrt{1+x^{2}+y^{2}}};\lyxmathsym{ }d\mu\left(X,Y\right)=\frac{dXdY}{\sqrt{1-X^{2}-Y^{2}}}\]
and since the Jacobian of the transformation $(x,y) \mapsto (X,Y)$ is\[
J=\det\left[\begin{array}{cc}
\frac{1+y^{2}}{\left(1+x^{2}+y^{2}\right)^{\frac{3}{2}}} & \frac{-xy}{\left(1+x^{2}+y^{2}\right)^{\frac{3}{2}}}\\
\frac{-xy}{\left(1+x^{2}+y^{2}\right)^{\frac{3}{2}}} & \frac{1+x^{2}}{\left(1+x^{2}+y^{2}\right)^{\frac{3}{2}}}\end{array}\right]=\left(1+x^{2}+y^{2}\right)^{-2}=\left(1-X^{2}-Y^{2}\right)^{2}\]
we deduce\begin{eqnarray*}
\tilde{f}_{m,n,N}\left(X,Y\right) & = & \left(1-X^{2}-Y^{2}\right)^{-1}\left(1-X^{2}-Y^{2}\right)^{N-m+\frac{1}{2}}\left(1+\frac{Y^{2}}{1-X^{2}-Y^{2}}\right)^{n}\\
 & \times & \vert C_{n}^{N-m-n}\left(\frac{Y}{\sqrt{1-X^{2}}}\right)\vert^{2}\vert C_{m}^{N-m+\frac{1}{2}}\left(X\right)\vert^{2}\\
 & = & \left(1-X^{2}-Y^{2}\right)^{N-n-m-\frac{1}{2}}\left(1-X^{2}\right)^{n}\vert C_{n}^{N-m-n}\left(\frac{Y}{\sqrt{1-X^{2}}}\right)\vert^{2}\vert C_{m}^{N-m+\frac{1}{2}}\left(X\right)\vert^{2}.\end{eqnarray*}
But since the distribution of the harmonic oscillator on the sphere
reads, in terms of Gegenbauer polynomials,\begin{equation}
g_{m,n,\nu}\left(X,Y\right)=\left(1-Y^{2}\right)^{m-\frac{1}{2}}\left(1-X^{2}-Y^{2}\right)^{\nu}\vert C_{n}^{\nu+m+\frac{1}{2}}\left(Y\right)\vert^{2}\vert C_{m}^{\nu}\left(\frac{X}{\sqrt{1-Y^{2}}}\right)\vert^{2},\label{eq:gmnN}\end{equation}
we deduce that\[
\tilde{f}_{m,n,N}\left(X,Y\right)=g_{n,m,\nu}\left(Y,X\right)\]
with \[
\nu=N-m-n.\]
\end{proof}

\section{Conclusion}
We have shown that the Cari\~{n}ena orthogonal polynomials are Jacobi polynomials; moreover, there exists a natural bijection between the negative and the positive curvature cases. These results hold only in the two dimensional case.

\end{document}